\newtheorem{theorem}{Theorem}[section]
\newtheorem{lemma}{Lemma}[section]
\newtheorem{corollary}{Corollary}[section]
\newtheorem{conjecture}{Conjecture}[section]
\begin{document}

\begin{center}

{\huge\bf On uniquely 3-colorable plane graphs without prescribed adjacent faces
\footnote{This research is supported by 973 Program of China
 (Grant Nos: 2013CB329600)}}\\[6pt]
\end{center}

\begin{center}
\centerline{\large{ Ze-peng LI}}
{\footnotesize{School of Electronics Engineering and Computer Science}}\\
  {\footnotesize{Key Laboratory of High Confidence Software}} \\
  {\footnotesize{Technologies of Ministry of Education,}}\\
  {\footnotesize{ Peking University, Beijing 100871, China }} \\
\hspace{0.4cm}

\centerline{\large{ Naoki MATSUMOTO}}
{\footnotesize{Graduate School of Environment and Information}}\\
  {\footnotesize{Sciences Yokohama National University,}} \\
  {\footnotesize{ Yokohama, Japan}} \\
\hspace{0.4cm}

\centerline{\large{ En-qiang ZHU}}
{\footnotesize{School of Electronics Engineering and Computer Science}}\\
  {\footnotesize{Key Laboratory of High Confidence Software}} \\
  {\footnotesize{Technologies of Ministry of Education,}}\\
  {\footnotesize{ Peking University, Beijing 100871, China }} \\
\hspace{0.4cm}

\centerline{\large{ Jin XU}}
{\footnotesize{School of Electronics Engineering and Computer Science}}\\
  {\footnotesize{Key Laboratory of High Confidence Software}} \\
  {\footnotesize{Technologies of Ministry of Education,}}\\
  {\footnotesize{ Peking University, Beijing 100871, China }} \\
\hspace{0.4cm}

\centerline{\large{Tommy JENSEN}}
{\footnotesize{Kyungpook National University  }}\\
  {\footnotesize{ 1370 Sankyuk-dong Buk-gu Daegu, 702701, Korea }} \\
\hspace{0.4cm}

\end{center}

\begin{abstract}
{\footnotesize  A graph $G$ is \emph{uniquely k-colorable} if the chromatic number of $G$ is $k$ and $G$ has
only one $k$-coloring up to permutation of the colors. For a plane graph $G$, two faces $f_1$ and $f_2$ of $G$ are \emph{adjacent $(i,j)$-faces}
if $d(f_1)=i$, $d(f_2)=j$ and $f_1$ and $f_2$ have a common edge,
where $d(f)$ is the degree of a face $f$.
In this paper, we prove that every uniquely 3-colorable plane graph has adjacent $(3,k)$-faces, where $k\leq 5$.
The bound 5 for $k$ is best possible. Furthermore, we prove that there exist a class of uniquely 3-colorable plane graphs having neither adjacent $(3,i)$-faces nor adjacent $(3,j)$-faces, where $i,j\in \{3,4,5\}$ and $i \neq j$.
One of our constructions implies that there exist an infinite family of edge-critical uniquely 3-colorable plane graphs with $n$ vertices and $\frac{7}{3}n-\frac{14}{3}$ edges, where $n(\geq 11)$ is odd and $n\equiv 2\pmod{3}$.\\[3pt]
\textbf{Keywords} plane graph; unique coloring; uniquely $3$-colorable plane graph; construction; adjacent $(i,j)$-faces\\[3pt]
\textbf{MR(2010)} Subject classification: 05C15 }
\end{abstract}

\section{Introduction}

   {\small
For a plane graph $G$, $V(G)$, $E(G)$ and $F(G)$ are the sets of vertices, edges and faces of $G$, respectively. The degree of a vertex $v \in V(G)$, denoted by $d_{G}(v)$, is the number of neighbors of $v$ in $G$. The degree of a face $f\in F(G)$, denoted by $d_G(f)$, is the number of edges in its boundary, cut edges being counted twice. When no confusion can arise, $d_{G}(v)$ and $d_{G}(f)$ are simplified by $d(v)$ and $d(f)$, respectively. A face $f$ is a \emph{$k$-face} if $d(f)=k$ and a \emph{$k^+$-face} if $d(f)\geq k$.  Two faces $f_1$ and $f_2$ of $G$ are \emph{adjacent $(i,j)$-faces} if $d(f_1)=i$, $d(f_2)=j$ and $f_1$ and $f_2$ have at least one common edge. Two distinct paths of $G$ are \emph{internally disjoint} if they have no internal vertices in common.

 A graph $G$ is \emph{uniquely k-colorable} if $\chi(G)=k$ and $G$ has only one $k$-coloring
up to permutation of the colors, where the coloring is called a \emph{unique $k$-coloring} of $G$. In other words, all $k$-colorings
of $G$ induce the same partition of $V(G)$ into $k$ independent sets, in which an independent set is called a \emph{color class} of $G$. In addition, uniquely colorable graphs may be defined in terms of their chromatic polynomials, which initiated by Birkhoff \cite{Birkhoff1912} for planar graphs in 1912, and for general graphs by Whitney \cite{Whitney1932} in 1932. Because a graph $G$ is uniquely $k$-colorable if and only if its chromatic polynomial is $k!$. For a discussion of chromatic polynomials, see Read \cite{Read1968}.

Uniquely colorable graphs were first studied by Harary and Cartwright \cite{Harary1968} in 1968. They proved the following theorem.

\begin{theorem}(Harary and Cartwright \cite{Harary1968}) \label{theorem1.1}
Let $G$ be a uniquely $k$-colorable graph. Then for any unique $k$-coloring of $G$, the subgraph induced by the union of any two color classes is connected.
\end{theorem}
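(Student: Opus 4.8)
The plan is to argue by contradiction using a local colour swap. I would fix a unique $k$-coloring of $G$, let $V_1,\dots,V_k$ be its colour classes, pick any two indices $i\neq j$, and set $H=G[V_i\cup V_j]$, the subgraph induced by the union of the two classes. Since no edge of $G$ joins two vertices of the same colour, $H$ is bipartite with parts $V_i\cap V(H)$ and $V_j\cap V(H)$. Now suppose, for contradiction, that $H$ is disconnected, and let $C$ be one connected component of $H$, so that $V(H)\setminus V(C)\neq\emptyset$.

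First I would produce a second proper $k$-coloring. Define $c'$ by interchanging the colours $i$ and $j$ on the vertices of $C$ and leaving every other vertex with its original colour. The point to check is that $c'$ is proper. Any edge inside $C$ joins a vertex of colour $i$ to one of colour $j$, and the swap preserves this; any edge incident with a vertex $v\in V(C)$ and a vertex $u\notin V(C)$ cannot have $u\in V_i\cup V_j$, since such an edge would lie in $H$ and would place $u$ in the same component as $v$. Hence such a $u$ keeps a colour in $\{1,\dots,k\}\setminus\{i,j\}$ and no conflict is created, while edges disjoint from $V(C)$ are untouched. Thus $c'$ is a proper coloring using colours from $\{1,\dots,k\}$, i.e.\ a $k$-coloring of $G$.

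Next I would show that $c'$ induces a partition of $V(G)$ different from the original, contradicting unique $k$-colorability. Only the classes $i$ and $j$ change: the new class of colour $i$ is $(V_i\setminus V(C))\cup(V_j\cap V(C))$. For the two partitions to coincide this set must equal $V_i$ or $V_j$. Because $V_i\cap V_j=\emptyset$, equality with $V_i$ forces both $V_j\cap V(C)=\emptyset$ and $V_i\cap V(C)=\emptyset$, so $C$ meets neither class, which is impossible for a component of $H$; equality with $V_j$ forces $V_i\cup V_j\subseteq V(C)$, i.e.\ $V(C)=V(H)$, contradicting that $H$ is disconnected. Either way the partitions differ, giving the desired contradiction.

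The argument is short, so the only real care is in the middle step: one must confirm that a single component can be recoloured independently of the rest — which rests on the observation that distinct components of $H$ share no edge in $G$, because every $G$-edge between two vertices of $V_i\cup V_j$ already appears in $H$ — and that the resulting partition is genuinely new rather than the global swap of $i$ and $j$. I would absorb the degenerate possibilities (for instance $C$ being a single isolated vertex, or a colour class being empty) into the same set-equality computation above, since that check handles all cases uniformly.
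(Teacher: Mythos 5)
Your argument is correct: it is the standard colour-interchange (Kempe-style) proof. The paper itself offers no proof to compare against --- Theorem \ref{theorem1.1} is quoted from Harary and Cartwright \cite{Harary1968} as a known result --- so your write-up stands on its own. The key step, that a single component $C$ of $H=G[V_i\cup V_j]$ can have the colours $i$ and $j$ interchanged without creating a conflict, is justified properly by your observation that any $G$-edge between two vertices of $V_i\cup V_j$ is an edge of $H$ and hence cannot leave $C$. One small point you should make explicit in the final set-equality check: the new class of colour $i$ is a subset of $V_i\cup V_j$, and every class $V_l$ with $l\notin\{i,j\}$ is disjoint from $V_i\cup V_j$, which is why equality with $V_i$ or $V_j$ are the only cases to rule out (the possibility that the new class is empty is excluded because it would yield a proper colouring with fewer than $k$ colours, contradicting $\chi(G)=k$). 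With that sentence added, the proof is complete.
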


As a corollary of Theorem \ref{theorem1.1}, it can be seen that a uniquely $k$-colorable graph $G$ has at least $(k-1)|V(G)|-{k \choose 2}$ edges.  There are many references on uniquely colorable graphs \cite{Chartrand1969,Harary1969,Bollob¨¢s1978}.

Chartrand and Geller \cite{Chartrand1969} in 1969 started to study uniquely colorable planar graphs. They proved that uniquely 3-colorable planar graphs with at least 4 vertices contain at least two triangles, uniquely 4-colorable planar graphs are maximal planar graphs, and uniquely 5-colorable planar graphs do not exist. Aksionov \cite{Aksionov1977} in 1977 improved the lower bound for the number of triangles in a uniquely 3-colorable planar graph. He proved that a uniquely 3-colorable planar graph with at least 5 vertices contains at least 3 triangles and gave a complete description of uniquely 3-colorable planar graphs containing exactly 3 triangles.

Let $G$ be a uniquely $k$-colorable graph, $G$ is \emph{edge-critical} if $G-e$ is not uniquely $k$-colorable for any edge $e\in E(G)$. Obviously, if a uniquely $k$-colorable graph $G$ has exactly $(k-1)|V(G)|-{k \choose 2}$ edges, then $G$ is edge-critical. Mel'nikov and Steinberg \cite{Mel'nikov1977} in 1977 asked to
find an exact upper bound for the number of edges in an edge-critical uniquely 3-colorable planar graph with $n$ vertices.
Recently, Matsumoto \cite{Matsumoto2013} proved that an edge-critical uniquely 3-colorable planar graph has at most $\frac{8}{3}n-\frac{17}{3}$ edges and  constructed an infinite family of edge-critical uniquely 3-colorable planar graphs with $n$ vertices and $\frac{9}{4}n-6$ edges, where $n\equiv 0 (\textrm{mod}~4)$.
\vspace{0.1cm}

In this paper, we mainly prove Theorem \ref{theorem1.2}.

 \begin{theorem} \label{theorem1.2}
If $G$ is a uniquely 3-colorable plane graph, then $G$ has adjacent $(3,k)$-faces, where $k\leq 5$. The bound 5 for $k$ is best possible.
\end{theorem}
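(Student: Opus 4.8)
The plan is to argue the existence statement by contradiction using Euler's formula together with the edge bound for uniquely $3$-colorable graphs, and then to settle the tightness of the bound by an explicit construction. Throughout write $n=|V(G)|$, $e=|E(G)|$, $F=|F(G)|$, let $f_d$ denote the number of $d$-faces, and set $t=f_3$; we may assume $G$ is $2$-connected, so that every $3$-face is bounded by three distinct edges. Suppose, for contradiction, that $G$ has no adjacent $(3,k)$-faces with $k\le 5$; equivalently, no two $3$-faces share an edge and no $3$-face shares an edge with a $4$-face or a $5$-face, so that every edge on the boundary of a $3$-face has a $6^+$-face on its other side.

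First I would extract a lower bound on $t$ that holds for \emph{every} uniquely $3$-colorable plane graph, independent of the adjacency hypothesis. By the corollary to Theorem~\ref{theorem1.1} we have $e\ge 2n-3$, and substituting $n=e-F+2$ from Euler's formula yields $e\le 2F-1$. Hence $\sum_{f\in F(G)}(6-d(f))=6F-2e\ge 2F+2$. Expanding the left side as $3t+2f_4+f_5-\sum_{d\ge 7}(d-6)f_d$ and subtracting $2F=2\bigl(t+f_4+f_5+\sum_{d\ge 6}f_d\bigr)$ gives, after simplification, $t\ge 2+f_5+2f_6+\sum_{d\ge 7}(d-4)f_d$. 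In particular $t\ge 2$, so triangular faces are plentiful.

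Next I would use the adjacency hypothesis to bound $t$ from above. Since no two $3$-faces are adjacent, the $3t$ boundary edges of the triangular faces are pairwise distinct, and each of them borders a $6^+$-face; counting these incidences inside $\sum_{d\ge 6}d\,f_d$ (each such edge being counted once, on its $6^+$-side) gives $3t\le\sum_{d\ge 6}d\,f_d=6f_6+\sum_{d\ge 7}d\,f_d$. Multiplying the lower bound on $t$ by $3$ and combining with this upper bound collapses, after cancellation, to $6+3f_5+\sum_{d\ge 7}2(d-6)f_d\le 0$, whose left side is at least $6$ — a contradiction. This proves that adjacent $(3,k)$-faces with $k\le 5$ always exist.

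Finally, to show the bound is \emph{best possible} I would exhibit a uniquely $3$-colorable plane graph in which every triangular face is adjacent only to $5^+$-faces, so that no adjacent $(3,3)$- or $(3,4)$-faces occur and $k=5$ cannot be lowered. The idea is to start from a small uniquely $3$-colorable skeleton and repeatedly substitute gadgets that insulate each triangle inside a pentagonal region while preserving the forced coloring; one then checks that the result is planar, simple, and that the three color classes still meet the connectivity condition of Theorem~\ref{theorem1.1} in exactly one way. I expect the genuine difficulty to lie here rather than in the counting: verifying that the constructed graph is \emph{uniquely} $3$-colorable (and not merely $3$-colorable), and that the insulation really forbids every $(3,4)$-adjacency, requires a careful analysis of the local colorings around each gadget.
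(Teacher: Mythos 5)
Your counting argument for the existence half is correct and is essentially the paper's proof in a different costume: the paper runs a discharging scheme in which each $3$-face pulls $\frac{1}{3}$ across each of its edges from the $6^+$-face on the other side, concluding $2|E(G)|=\sum_f d(f)\ge 4|F(G)|$ and hence $|E(G)|\le 2|V(G)|-4$, contradicting $|E(G)|\ge 2|V(G)|-3$; your chain $e\ge 2n-3\Rightarrow e\le 2F-1\Rightarrow 6F-2e\ge 2F+2$, followed by the double count $3t\le\sum_{d\ge 6}d\,f_d$, is the same computation carried out globally rather than locally, and it checks out. Two small points you should make explicit: $2$-connectivity is not a harmless normalization but a fact to be quoted (uniquely $k$-colorable graphs are $(k-1)$-connected), and you silently use that every face has degree at least $3$ when you write $2F$ as $2(t+f_4+f_5+\sum_{d\ge 6}f_d)$. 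Your intermediate inequality $t\ge 2+f_5+2f_6+\sum_{d\ge 7}(d-4)f_d$ is a nice bonus that the paper does not state.

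The genuine gap is in the second half. To prove that $5$ is best possible you must actually produce a uniquely $3$-colorable plane graph having neither adjacent $(3,3)$-faces nor adjacent $(3,4)$-faces, and your proposal only describes a strategy (``repeatedly substitute gadgets that insulate each triangle inside a pentagonal region'') without specifying the starting skeleton, the gadget, or why unique colorability survives the substitution. As you yourself concede, this is where the real work lies: a gadget that is merely $3$-colorable with the intended boundary colouring may admit a second internal colouring, or may permit a recolouring of the whole graph, so preserving \emph{unique} $3$-colorability under local replacement requires a genuine argument. The paper settles the tightness claim with a single explicit example (the graph of Fig.~\ref{figure1}), for which one verifies directly that the $3$-colouring is forced and that every $3$-face meets only $5^+$-faces. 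Without such an explicit graph, or a fully specified gadget together with a proof that the forced colouring propagates through it, the ``best possible'' assertion remains unproved in your write-up.
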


Furthermore, by using constructions, we prove that there exist uniquely 3-colorable plane graphs having neither adjacent $(3,i)$-faces nor adjacent $(3,j)$-faces, where $i,j\in \{3,4,5\}$ and $i \neq j$. One of our constructions implies that there exist an infinite family of edge-critical uniquely 3-colorable plane graphs with $n$ vertices and $\frac{7}{3}n-\frac{14}{3}$ edges, where $n(\geq 11)$ is odd and $n\equiv 2\pmod{3}$.

}

\section{Proof of Theorem \ref{theorem1.2}}
\begin{lemma} \label{lemma2.1}
Let $G$ be a plane graph with 3-faces. If $G$ has no adjacent $(3,k)$-faces, where $k\leq 5$, then $|E(G)|\geq 2|F(G)|$.
\end{lemma}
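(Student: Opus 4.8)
The plan is to use a discharging argument that reformulates the desired inequality as a statement about total charge. I would assign to each face $f$ the initial charge $\mu(f) = d(f) - 4$. By the face-counting identity $\sum_{f \in F(G)} d(f) = 2|E(G)|$, the total initial charge is $\sum_{f} (d(f)-4) = 2|E(G)| - 4|F(G)|$, so proving $|E(G)| \geq 2|F(G)|$ is equivalent to showing that the total charge is nonnegative. Since discharging only redistributes charge, it suffices to design rules after which every face carries nonnegative charge.

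The only faces with negative initial charge are the 3-faces, each carrying $-1$. The key structural input is the hypothesis that a 3-face is adjacent to no $3$-, $4$-, or $5$-face, so every edge on the boundary of a 3-face separates it from a $6^+$-face. (A 3-face is bounded by three distinct non-bridge edges, since its boundary is a triangle, a cycle; hence across each of its three edges there genuinely lies a neighboring face.) I would then impose the single rule: across every edge shared by a 3-face and a $6^+$-face, the $6^+$-face sends charge $\tfrac{1}{3}$ to the 3-face. Each 3-face thus receives $\tfrac{1}{3}$ three times, ending with charge $-1 + 1 = 0$.

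It remains to verify the other faces stay nonnegative. A 4-face keeps charge $0$ and a 5-face keeps charge $1$, since by hypothesis neither is adjacent to a 3-face and so neither is ever a sender. A $k$-face with $k \geq 6$ sends out at most $\tfrac{1}{3}$ for each of its $k$ boundary edges, hence at most $\tfrac{k}{3}$ in total, so its final charge is at least $(k-4) - \tfrac{k}{3} = \tfrac{2(k-6)}{3} \geq 0$. Therefore every face ends with nonnegative charge, the total charge is nonnegative, and the inequality $|E(G)| \geq 2|F(G)|$ follows.

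The argument is essentially forced once the charge $d(f)-4$ and the amount $\tfrac{1}{3}$ are chosen; the crucial numerical coincidence is that $3(k-4) \geq k$ holds exactly when $k \geq 6$, which is precisely why excluding adjacency of 3-faces to faces of degree at most $5$ is the right hypothesis. The one point needing care, and the closest thing to an obstacle, is the boundary bookkeeping: confirming that each 3-face collects a full unit of charge and that no $6^+$-face is charged for fewer or more edges than it actually shares with 3-faces. Potential degeneracies, such as a large face meeting the same 3-face across more than one edge, cause no trouble because the rule is stated per shared edge rather than per neighboring face, so both the amount received by a 3-face and the amount paid by a $6^+$-face are controlled edge by edge.
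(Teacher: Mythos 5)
Your proof is correct and is essentially the paper's own argument: the same rule (each $6^+$-face sends $\tfrac{1}{3}$ to an adjacent 3-face across each shared edge) and the same case check, with your charge $d(f)-4$ being just a shift of the paper's initial charge $d(f)$ (the paper shows every face ends with charge at least $4$ and sums to $2|E(G)|\geq 4|F(G)|$). Your extra remarks on bridges and on a large face meeting a 3-face along several edges are sound but do not change the substance.
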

\begin{proof}
We prove this by using a simple charging scheme. Since $G$ has no adjacent $(3,k)$-faces when $k\leq 5$, for any edge $e$ incident to a 3-face $f$, $e$ is incident to a face of degree at least 6.  Let $ch(f)=d(f)$ for any face $f\in F(G)$ and we call $ch(f)$ the initial charge of the face $f$. Let initial charges in $G$ be redistributed according to the following rule.

{\textbf{Rule}}: For each 3-face $f$ of $G$ and each edge $e$ incident with $f$, the $6^+$-face incident with $e$ sends $\frac{1}{3}$ charge to $f$ through $e$.

Denote by $ch^\prime(f)$ the charge of a face $f\in F(G)$ after applying redistributed Rule. Then

$$\sum_{f\in F(G)}ch^\prime(f)=\sum_{f\in F(G)}ch(f)=\sum_{f\in F(G)}d(f)=2|E(G)| \eqno{(1)}$$

On the other hand, for any 3-face $f$ of $G$, since the degree of each face adjacent to $f$ is at least 6, then by the redistributed Rule, $ch^\prime(f)=ch(f)+3\cdot\frac{1}{3}=d(f)+1=4$. For any 4-face or 5-face $f$ of $G$, $ch^\prime(f)=ch(f)=d(f)\geq 4$. For any $6^+$-face $f$ of $G$, since $f$ is incident to at most $d(f)$ edges each of which is incident to a 3-face, then $ch^\prime(f)\geq ch(f)-\frac{1}{3}d(f)=\frac{2}{3}d(f)\geq 4$. Therefore, we have
$$\sum_{f\in F(G)}ch^\prime(f)\geq \sum_{f\in F(G)}4=4|F(G)| \eqno{(2)}$$

By the formulae (1) and (2), we have $|E(G)|\geq 2|F(G)|$.
\end{proof}

\emph{Proof of Theorem \ref{theorem1.2}}
Suppose that the theorem is not true and let $G$ be a counterexample to the theorem. Then $G$ has at least one 3-face and no adjacent $(3,k)$-faces, where $k\leq 5$.
By Lemma \ref{lemma2.1}, $|E(G)|\geq 2|F(G)|$. Using Euler's Formula $|V(G)|-|E(G)|+|F(G)|=2$, we can obtain
$$|E(G)|\leq 2|V(G)|-4.$$

Since $G$ is uniquely 3-colorable, then by Theorem \ref{theorem1.1}, we have $|E(G)|\geq 2|V(G)|-3.$ This is a contradiction.

Note that the graph shown in Fig. \ref{figure1} is a uniquely 3-colorable plane graph having neither adjacent $(3,3)$-faces nor adjacent $(3,4)$-faces. Therefore, the bound 5 for $k$ is best possible. ~~~~~~~~~~~~~~~~~~~$\Box$

\begin{figure}[H]
\begin{center}
  \centering
  \includegraphics[width=120pt]{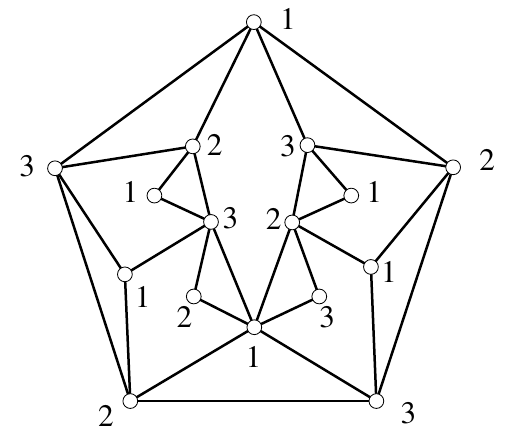}
\end{center}
\caption{A uniquely 3-colorable plane graph having neither adjacent $(3,3)$-faces nor adjacent $(3,4)$-faces}\label{figure1}
\end{figure}

\textbf{Remark}. By piecing together more copies of the plane graph in Fig. \ref{figure1}, one can construct an infinite class of uniquely 3-colorable plane graphs having neither adjacent $(3,3)$-faces nor adjacent $(3,4)$-faces.

\section{Construction of uniquely 3-colorable plane graphs without adjacent $(3,3)$-faces or adjacent $(3,5)$-faces}

There are many classes of uniquely 3-colorable plane graphs having neither adjacent $(3,4)$-faces nor adjacent $(3,5)$-faces, such as even maximal plane graphs (maximal plane graphs in which each vertex has even degree) and maximal outerplanar graphs with at least 6 vertices.

In this section, we construct a class of uniquely 3-colorable plane graphs having neither adjacent $(3,3)$-faces nor adjacent $(3,5)$-faces and prove that these graphs are edge-critical.

We construct a graph $G_k$ as follows: \\
(1) $V(G_k)=\{u,w,v_0,v_1,\ldots,v_{3k-1}\}$;\\
(2) $E(G_k)=\{v_0v_1, v_1v_2,\ldots,v_{3k-2}v_{3k-1},v_{3k-1}v_0\}\cup \{uv_i:i\equiv 1$ or $2 \pmod{3}\}\cup \{wv_i:i\equiv 0$ or $1 \pmod{3}\}$,
where $k$ is odd and $k\geq 3$. (See an example $G_3$ shown in Fig. \ref{figure2}.)

\begin{figure}[H]
\begin{center}
  \centering
  \includegraphics[width=180pt]{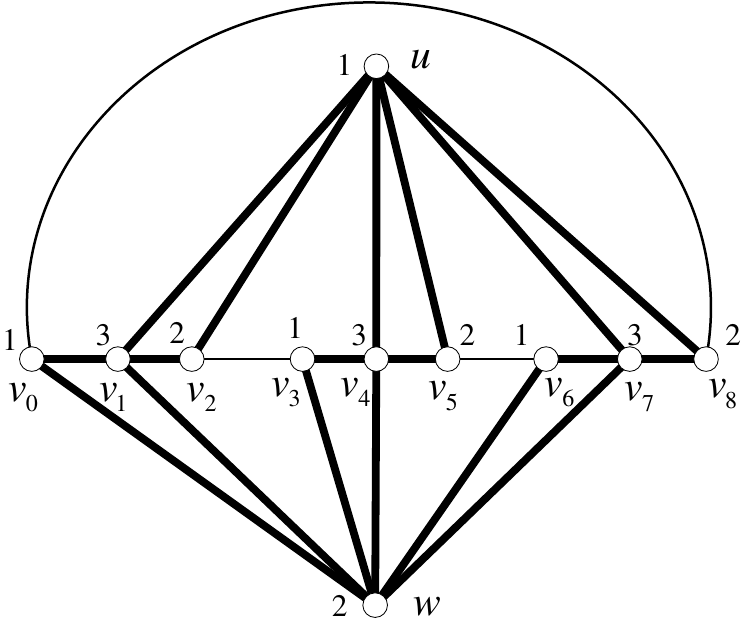}
\end{center}
\caption{An example $G_3$}\label{figure2}
\end{figure}

\begin{theorem} \label{theorem3.1}
For any odd $k$ with $k\geq 3$, $G_k$ is uniquely 3-colorable.
\end{theorem}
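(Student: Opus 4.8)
The plan is to exhibit one explicit proper $3$-colouring, confirm that three colours are genuinely needed, and then show that every proper $3$-colouring coincides with the exhibited one after a permutation of colours. Throughout I would index colours by residues modulo $3$, since the neighbourhoods of $u$ and $w$ are governed entirely by congruences mod $3$: $u$ sees exactly the $v_i$ with $i\equiv 1,2$, and $w$ sees exactly the $v_i$ with $i\equiv 0,1$.

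First I would record the colouring $c$ defined by $c(v_i)\equiv i\pmod 3$, $c(u)=0$ and $c(w)=2$, and verify that it is proper. Consecutive cycle vertices receive distinct residues, including the wrap-around pair $v_{3k-1},v_0$, which get $2$ and $0$; the neighbours of $u$ are coloured $1,2$, avoiding $0$; and the neighbours of $w$ are coloured $0,1$, avoiding $2$. This gives $\chi(G_k)\le 3$. For the matching lower bound I would observe that $v_0v_1\cdots v_{3k-1}v_0$ is a cycle of length $3k$, which is odd because $k$ is odd, so $G_k$ contains an odd cycle and is not bipartite; hence $\chi(G_k)=3$.

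The heart of the argument is uniqueness, and here I would take an arbitrary proper $3$-colouring $c$ and force its values. The first step, which is exactly where the parity hypothesis is used, is to show $c(u)\ne c(w)$: every cycle vertex is adjacent to $u$ or to $w$, so if $c(u)=c(w)=a$ then no $v_i$ could use colour $a$, and the odd cycle $v_0\cdots v_{3k-1}$ would be properly $2$-coloured, a contradiction. Thus $c(u)\ne c(w)$, and after permuting colours I may assume $c(u)=0$ and $c(w)=2$. The second step is a forcing cascade around the cycle: each $v_i$ with $i\equiv 1$ is adjacent to both $u$ and $w$, so $c(v_i)=1$; then each $v_i$ with $i\equiv 0$ is adjacent to $w$ (colour $2$) and to $v_{i+1}$ (colour $1$), forcing $c(v_i)=0$; and finally each $v_i$ with $i\equiv 2$ is adjacent to $u$ (colour $0$) and to $v_{i-1}$ (colour $1$), forcing $c(v_i)=2$. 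Hence $c$ agrees with the colouring $c$ above. Since the only freedom was the choice of the ordered pair of distinct colours for $(u,w)$, there are exactly $3!$ proper $3$-colourings and they are all permutations of one another, so $G_k$ is uniquely $3$-colourable.

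I expect the only real subtlety to be the step $c(u)\ne c(w)$: recognising that the odd length of the spanning cycle, forced by $k$ being odd, is precisely what excludes $c(u)=c(w)$. Once this is established the cascade is routine and propagates consistently all the way around, with no clash at the wrap-around because $v_{3k-1}$ and $v_0$ are assigned the distinct colours $2$ and $0$.
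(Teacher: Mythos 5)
Your proof is correct and follows essentially the same route as the paper's: both deduce $c(u)\neq c(w)$ from the odd cycle $v_0v_1\cdots v_{3k-1}v_0$ together with the fact that every $v_i$ is adjacent to $u$ or $w$, then force the colors of the $v_i$ in the order $i\equiv 1$, $i\equiv 0$, $i\equiv 2\pmod 3$. You are somewhat more careful than the paper in explicitly exhibiting a proper $3$-coloring and verifying $\chi(G_k)=3$, but the core argument is identical.
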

\begin{proof}
Let $f$ be any 3-coloring of $G_k$. Since $v_0v_1\ldots v_{3k-1}v_0$ is a cycle of odd length and each $v_i$ is adjacent to $u$ or $w$, we have $f(u)\neq f(w)$. Without loss of generality, let $f(u)=1$ and $f(w)=2$. By the construction of $G_k$, we know that $v_{3j+1}$ is adjacent to both $u$ and $w$, where $j=0,1,\ldots,k-1$. So $v_{3j+1}$ can only receive the color 3, namely $f(v_{3j+1})=3$, $j=0,1,\ldots,k-1$. Since $v_{3j}$ is adjacent to both $w$ and $v_{3j}$ in $G_k$, we have $f(v_{3j})=1$, $j=0,1,\ldots,k-1$. Similarly, we can obtain $f(v_{3j+2})=2$, $j=0,1,\ldots,k-1$. Therefore, the 3-coloring $f$ is uniquely decided as shown in Fig. 2 and then $G_k$ is uniquely 3-colorable.
\end{proof}

\begin{theorem} \label{theorem3.2}
 For any odd $k$ with $k\geq 3$, $G_k$ is edge-critical.
\end{theorem}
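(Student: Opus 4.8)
The plan is to verify directly that for every edge $e=xy\in E(G_k)$ the graph $G_k-e$ admits a proper $3$-coloring in which $x$ and $y$ receive the \emph{same} color. This suffices to prove edge-criticality. Indeed, by Theorem \ref{theorem3.1}, $G_k$ has a unique $3$-coloring $f$, namely the one in Fig. \ref{figure2}, with color classes $C_1=\{u\}\cup\{v_{3j}\}$, $C_2=\{w\}\cup\{v_{3j+2}\}$ and $C_3=\{v_{3j+1}\}$; in $f$ the two endpoints of every edge get distinct colors. Since color permutations preserve the relation ``$x$ and $y$ are equally/unequally colored,'' any $3$-coloring of $G_k-e$ assigning $x,y$ a common color lies in a permutation class different from $f$. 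As $f$ remains a proper coloring of $G_k-e$, the graph $G_k-e$ then has at least two $3$-colorings up to permutation, so it is not uniquely $3$-colorable; as this holds for all $e$, the graph $G_k$ is edge-critical.

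First I would group the edges by the pair of colors of their endpoints and study the two-color subgraphs. By Theorem \ref{theorem1.1}, each subgraph $H_{ab}$ induced by $C_a\cup C_b$ is connected. A short count shows that $H_{13}$ and $H_{23}$ each have $2k$ edges on $2k+1$ vertices, hence are trees, so every edge of these two subgraphs is a bridge. For an edge $e=xy$ lying in such a tree, deleting $e$ splits $H_{ab}$ into two components; Kempe-swapping the two colors $a\leftrightarrow b$ on the component containing $y$ changes $f(y)$ into $f(x)$ and stays proper on $G_k-e$, because the only edge whose endpoints could now clash is $e$ itself, which has been removed. This handles all edges whose endpoints use colors $\{1,3\}$ or $\{2,3\}$, i.e. the cycle edges $v_{3j}v_{3j+1}$ and $v_{3j+1}v_{3j+2}$ and the spokes $uv_{3j+1}$ and $wv_{3j+1}$.

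The remaining edges are exactly those joining $C_1$ and $C_2$: the spokes $uv_{3j+2}$ and $wv_{3j}$ and the cycle edges $v_{3j+2}v_{3j+3}$. Here the relevant two-color subgraph $H_{12}$ is \emph{not} a tree; it is the union of $k$ internally disjoint $u$--$w$ paths of length $3$ (a generalized theta graph), so for $k\ge 3$ no single edge of it is a bridge and the Kempe argument fails. This is the main obstacle, and for these edges I would instead construct an explicit coloring with $f(u)=f(w)=1$. Deleting a spoke $uv_{3j+2}$ (respectively $wv_{3j}$) frees $v_{3j+2}$ (respectively $v_{3j}$) from both hubs $u$ and $w$; forcing every other $v_i$ into $\{2,3\}$ and $2$-coloring them (possible since the remaining cycle-path is bipartite) lets the freed vertex take color $1$, giving it the color of the hub at the deleted spoke. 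Deleting a cycle edge $v_{3j+2}v_{3j+3}$ opens the $3k$-cycle into a Hamiltonian path on the cycle-vertices; with $u,w$ both colored $1$ these vertices must be $2$-colored by $\{2,3\}$, and since $3k$ is odd the two ends of the path---precisely the endpoints of the deleted edge---receive equal colors. In every case the endpoints of $e$ end up equally colored, which completes the proof. It is worth noting that the oddness of $k$ (hence of $3k$) is what both forces uniqueness in Theorem \ref{theorem3.1} and, in the theta-graph cases, makes the two path-ends agree.
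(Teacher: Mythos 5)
Your proof is correct and follows essentially the same route as the paper: partition $E(G_k)$ by the color pairs of endpoints under the unique coloring, observe that the $\{1,3\}$- and $\{2,3\}$-subgraphs are trees (so deleting any of their edges permits a recoloring), and for the $\{1,2\}$-edges exhibit a second coloring with $u$ and $w$ both colored $1$, using the oddness of the $3k$-cycle. The only cosmetic differences are that you phrase the tree cases as explicit Kempe swaps where the paper invokes Theorem \ref{theorem1.1} directly, and you build the second coloring on $G_k-e$ itself rather than first suppressing degree-$2$ vertices as the paper does.
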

\begin{proof}
To complete the proof it suffices to show that $G_k-e$ is not uniquely 3-colorable for any edge $e\in E(G_k)$. Let $f$ be a uniquely 3-coloring of $G_k$ shown in Fig. 2. Denote by $E_{ij}$ the set of edges in $G_k$ whose ends colored by $i$ and $j$, respectively, where $1\leq i< j\leq 3$. Namely
$$E_{ij}=\{xy: xy\in E(G_k), f(x)=i, f(y)=j\}, 1\leq i< j\leq 3.$$

\texttt{Observation 1.} Both the subgraphs $G_k[E_{13}]$ and  $G_k[E_{23}]$ of $G_k$ induced by $E_{13}$ and $E_{23}$ are trees.

\texttt{Observation 2.} The subgraph $G_k[E_{12}]$ of $G_k$ induced by $E_{12}$ consists of $k$ internally disjoint paths $uv_{3i-1}v_{3i}w$, where $i=1,2,\ldots,k$.

If $e\in E_{13}\cup E_{23}$, then $G_k-e$ is not uniquely 3-colorable by Observation 1. Suppose that $e\in E_{12}$. By Observation 2, there exists a number $t\in \{1,2,\ldots,k\}$ such that $e\in \{uv_{3t-1}, v_{3t-1}v_{3t},v_{3t}w\}$. Moreover, $G_k-e$ contains at least one vertex of degree 2. By repeatedly deleting vertices of degree 2 in $G_k-e$ we can obtain a subgraph $G_k-\{v_{3t-1},v_{3t}\}$ of $G_k$. Now we prove that $G_k-\{v_{3t-1},v_{3t}\}$ is not uniquely 3-colorable.

It can be seen that the restriction $f_0$ of $f$ to the vertices of $G_k-\{v_{3t-1},v_{3t}\}$ is a 3-coloring of $G_k-\{v_{3t-1},v_{3t}\}$. On the other hand,  $G_k-\{v_{3t-1},v_{3t},u,w\}$ is a path, denoted by $P$. Let $f'(u)=f'(w)=1$ and alternately color the vertices of $P$ by the other two colors. We can obtain a 3-coloring $f'$ of $G_k-\{v_{3t-1},v_{3t}\}$ which is distinct from $f_0$. Since each 3-coloring of $G_k-\{v_{3t-1},v_{3t}\}$ can be extended to a 3-coloring of $G_k-e$, we know that $G_k-e$ is not uniquely 3-colorable when $e\in E_{12}$.

Since $E(G_k)=E_{12}\cup E_{13}\cup E_{23}$, $G_k-e$ is not uniquely 3-colorable for any edge $e\in E(G_k)$.
\end{proof}

Note that $G_k$ has $3k+2$ vertices and $7k$ edges by the construction. From Theorem \ref{theorem3.2} we can obtain the following result.

\begin{corollary} \label{corollary3.3}
There exist an infinite family of edge-critical uniquely 3-colorable plane graphs with $n$ vertices and $\frac{7}{3}n-\frac{14}{3}$ edges, where $n(\geq 11)$ is odd and $n\equiv 2\pmod{3}$.
\end{corollary}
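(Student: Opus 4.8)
The plan is to read the corollary off directly from the construction of $G_k$ together with Theorems \ref{theorem3.1} and \ref{theorem3.2}, treating it purely as a parametrization argument. First I would fix the substitution $n = 3k+2$, so that the recorded vertex count $|V(G_k)| = 3k+2$ is exactly $n$ by definition. The edge count was already computed as $|E(G_k)| = 7k$; substituting $k = (n-2)/3$ gives
$$|E(G_k)| = 7k = \frac{7(n-2)}{3} = \frac{7}{3}n - \frac{14}{3},$$
which is precisely the claimed formula. Since each $G_k$ is a plane graph by construction (embed the cycle $v_0v_1\cdots v_{3k-1}$ in the plane with $u$ inside and $w$ outside, joined to the appropriate $v_i$), every $G_k$ is a concrete member of the asserted family.

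Next I would verify that the parameter range matches the stated residue class. As $k$ runs over the odd integers with $k \geq 3$, the value $n = 3k+2$ satisfies three properties simultaneously: $n \geq 11$ (attained at $k=3$), $n \equiv 2 \pmod{3}$ (immediate from $n = 3k+2$), and $n$ is odd (since $3k$ is odd whenever $k$ is odd, and odd $+\,2$ is odd). Conversely, any $n$ with these three properties yields an admissible parameter by setting $k = (n-2)/3$: the congruence $n \equiv 2 \pmod{3}$ makes $k$ an integer, $n \geq 11$ forces $k \geq 3$, and $n$ odd forces $k$ odd. Hence $k \mapsto 3k+2$ is a bijection from $\{k : k \text{ odd}, \ k \geq 3\}$ onto $\{n : n \geq 11, \ n \text{ odd}, \ n \equiv 2 \pmod{3}\}$, so the graphs $G_k$ realize every admissible $n$ exactly once and form an infinite family.

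The coloring and criticality properties then come for free: Theorem \ref{theorem3.1} gives that each $G_k$ is uniquely 3-colorable, and Theorem \ref{theorem3.2} gives that each is edge-critical. Combining these with the vertex and edge counts completes the argument.

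As for difficulty, there is no substantive obstacle here: the corollary is a bookkeeping consequence of the two preceding theorems. The only point requiring a little care is the number-theoretic matching in the second step, where one must check both that every odd $k \geq 3$ produces an $n$ of the stated form and that the correspondence is \emph{onto} the full stated set, so that the family is genuinely infinite and indexed precisely by the claimed residue class. Everything else is direct substitution.
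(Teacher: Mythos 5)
Your proposal is correct and follows the same route as the paper, which simply notes that $G_k$ has $3k+2$ vertices and $7k$ edges and cites Theorems \ref{theorem3.1} and \ref{theorem3.2}; your substitution $n=3k+2$ giving $7k=\frac{7}{3}n-\frac{14}{3}$ and the parity/residue check are exactly the bookkeeping the paper leaves implicit.
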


Denote by $size(n)$ the upper bound of the number of edges of edge-critical uniquely $3$-colorable planar graphs with $n$ vertices. Then by Corollary \ref{corollary3.3} and the result due to Matsumoto \cite{Matsumoto2013}, we can obtain the following result.

\begin{corollary} \label{corollary3.4}
 For any odd integer $n$ such that $n\equiv 2\pmod{3}$ and $n\geq 11$, we have $\frac{7}{3}n-\frac{14}{3}\leq size(n) \leq\frac{8}{3}n-\frac{17}{3}$.
\end{corollary}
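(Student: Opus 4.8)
The plan is to derive both inequalities from results already in hand, so no new graph-theoretic argument is required; the only genuine work is to confirm that the constructed family realizes exactly the admissible values of $n$ and then to assemble the two bounds.

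First I would establish the lower bound. By Corollary \ref{corollary3.3}, for every odd $n$ with $n\equiv 2\pmod 3$ and $n\geq 11$ there is an edge-critical uniquely $3$-colorable plane graph on $n$ vertices with precisely $\frac{7}{3}n-\frac{14}{3}$ edges. To see that the family $\{G_k\}$ actually attains each such $n$, I would use that $G_k$ has $3k+2$ vertices: setting $n=3k+2$ gives $k=\frac{n-2}{3}$, and as $k$ ranges over the odd integers with $k\geq 3$, the value $n=3k+2$ ranges over exactly the odd integers $\geq 11$ that are congruent to $2$ modulo $3$. Moreover $7k=7\cdot\frac{n-2}{3}=\frac{7}{3}n-\frac{14}{3}$, confirming the edge count. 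Since $size(n)$ is by definition the maximum number of edges among all edge-critical uniquely $3$-colorable planar graphs on $n$ vertices, the existence of such a member forces $size(n)\geq \frac{7}{3}n-\frac{14}{3}$; in particular this also shows that $size(n)$ is well defined on the stated range, as at least one such graph exists.

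Next I would invoke the upper bound. Matsumoto \cite{Matsumoto2013} proved that every edge-critical uniquely $3$-colorable planar graph with $n$ vertices has at most $\frac{8}{3}n-\frac{17}{3}$ edges; taking the maximum over all such graphs yields $size(n)\leq \frac{8}{3}n-\frac{17}{3}$ for every $n$ in the range. Combining the two estimates gives $\frac{7}{3}n-\frac{14}{3}\leq size(n)\leq \frac{8}{3}n-\frac{17}{3}$, as required.

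I do not expect a real obstacle here, since both bounds are inherited directly—the lower one from the explicit construction underlying Corollary \ref{corollary3.3} and the upper one from the theorem of Matsumoto. The single point that demands care is the bookkeeping in the second paragraph: verifying that the parity and congruence conditions on $k$ translate correctly into those on $n$, and that the arithmetic $7k=\frac{7}{3}n-\frac{14}{3}$ is consistent, so that the constructed family genuinely covers every $n$ to which the statement applies. This verification is routine, and with it the corollary follows immediately.
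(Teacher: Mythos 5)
Your proposal is correct and matches the paper's argument exactly: the paper likewise derives the lower bound from the family $G_k$ of Corollary \ref{corollary3.3} (with $n=3k+2$ vertices and $7k$ edges) and the upper bound from Matsumoto's theorem. Your added verification that odd $k\geq 3$ corresponds precisely to odd $n\geq 11$ with $n\equiv 2\pmod 3$ is the only bookkeeping needed, and it is done correctly.
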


\section{Concluding remarks}
In this paper we obtained a structural property of uniquely 3-colorable plane graphs. We proved that every uniquely 3-colorable plane graph has adjacent $(3,k)$-faces, where $k\leq 5$, and the bound 5 for $k$ is best possible. Fig. \ref{figure1} shows a uniquely 3-colorable plane graph having neither adjacent $(3,3)$-faces nor adjacent $(3,4)$-faces. But this plane graph is 2-connected. This prompts us to propose the following conjecture.

\begin{conjecture}\label{conjecture1}
Let $G$ be a 3-connected uniquely 3-colorable plane graph. Then $G$ has adjacent $(3,k)$-faces, where $k\leq 4$.
\end{conjecture}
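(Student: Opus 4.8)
The plan is to argue by contradiction, adapting the discharging proof of Theorem~\ref{theorem1.2}. Suppose $G$ is a $3$-connected uniquely $3$-colorable plane graph having no adjacent $(3,3)$-faces and no adjacent $(3,4)$-faces. By Theorem~\ref{theorem1.2} the graph $G$ still has at least one $3$-face, so under our assumption every $3$-face is bordered only by $5^+$-faces. Since $G$ is $3$-connected, each face is bounded by a cycle and any two faces share at most one edge; in particular every $3$-face is a genuine triangle meeting three distinct $5^+$-faces across its three edges. As in the proof of Theorem~\ref{theorem1.2}, it suffices to prove $|E(G)|\ge 2|F(G)|$, since together with Euler's formula this yields $|E(G)|\le 2|V(G)|-4$, contradicting the Harary--Cartwright bound $|E(G)|\ge 2|V(G)|-3$ coming from Theorem~\ref{theorem1.1}.

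To obtain $|E(G)|\ge 2|F(G)|$ I would again set $ch(f)=d(f)$ and redistribute so that every face ends with charge at least $4$. The difficulty, and the whole reason $3$-connectivity should be needed, is that $5$-faces are now permitted to border $3$-faces: the rule of Lemma~\ref{lemma2.1}, in which each $3$-face pulls $\tfrac13$ across every incident edge, drives a $5$-face that borders four or five triangles strictly below $4$, so it no longer certifies $|E(G)|\ge 2|F(G)|$. The crux of the proof is therefore to show that in a $3$-connected uniquely $3$-colorable plane graph a $5$-face cannot be surrounded by too many triangles, and then to design a discharging rule (sending less than $\tfrac13$ where necessary, or collecting a compensating surplus from incident $6^+$-faces) under which every face finishes with charge at least $4$.

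For the key local bound I would combine the unique coloring with the face structure. The no-$(3,3)$ hypothesis already forces the apexes of the triangles sitting on two consecutive edges of a $5$-face to be distinct, since otherwise those two triangles would share an edge and form adjacent $(3,3)$-faces. More importantly, the boundary of a $5$-face is a properly $3$-colored odd cycle, so its color pattern is rigid, and the apex of the triangle on each edge of the face is forced to receive the unique third color; tracking these forced colors around the face, together with the property from Theorem~\ref{theorem1.1} that each two-color subgraph $G[C_i\cup C_j]$ is connected, should constrain how many edges of a single $5$-face can carry triangles. I expect this step---converting the global rigidity of the unique $3$-coloring into a usable local cap on triangle--$5$-face incidences---to be the main obstacle, because the purely local face conditions alone do not forbid a $5$-face bordered by five triangles, and this is presumably why the statement is only conjectured.

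If the clean local cap proves elusive, a natural fallback is a minimal-counterexample argument: take $G$ with $|V(G)|$ smallest, exhibit a reducible configuration around a triangle adjacent only to $5^+$-faces, and reduce to a smaller $3$-connected uniquely $3$-colorable plane graph, contradicting minimality. The delicate point there is that the deletion or contraction must be carried out so as to preserve simultaneously $3$-connectivity and unique $3$-colorability, which is again where the rigidity of the coloring would have to do the essential work.
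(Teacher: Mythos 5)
This statement is posed in the paper as Conjecture~\ref{conjecture1}; the authors give no proof of it, and your proposal does not supply one either. What you have written is a sensible research plan rather than a proof, and you say so yourself. The framework you set up is the natural one and matches the paper's method for Theorem~\ref{theorem1.2}: show $|E(G)|\ge 2|F(G)|$ by discharging, combine with Euler's formula to get $|E(G)|\le 2|V(G)|-4$, and contradict the bound $|E(G)|\ge 2|V(G)|-3$ from Theorem~\ref{theorem1.1}. You also correctly locate exactly where that method breaks once $(3,5)$-adjacencies are allowed: a $5$-face bordered by $t$ triangles finishes with charge $5-\tfrac{t}{3}$, which drops below $4$ precisely when $t\ge 4$, so the whole argument hinges on showing that a $5$-face in a $3$-connected uniquely $3$-colorable plane graph cannot border four or five triangles (or on finding a compensating surplus elsewhere).

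That local cap is the entire content of the conjecture, and you do not establish it. The coloring observations you invoke are too weak: the boundary of a $5$-face carries a color pattern such as $1,2,1,2,3$, and the forced apex colors $3,3,3,1,2$ on its five edges produce no local contradiction, so a $5$-face surrounded by five triangles is not excluded by any purely local use of the unique coloring. The connectivity of the two-color subgraphs from Theorem~\ref{theorem1.1} is a global property, and you give no mechanism for converting it into the needed local bound. The fallback minimal-counterexample sketch has the same status: no reducible configuration is exhibited, and the preservation of both $3$-connectivity and unique $3$-colorability under a reduction is precisely the hard part. In short, your proposal is an accurate diagnosis of why the statement is difficult, but it leaves the essential step open, which is consistent with the authors leaving it as a conjecture.
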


It can be seen that the uniquely 3-colorable plane graph $G_k$ constructed in Section 3 is 3-connected. So if Conjecture \ref{conjecture1} is true, then the bound 4 for $k$ is best possible.

\end{document}